\newcommand{\la}{\ensuremath{\langle}}
\newcommand{\ra}{\ensuremath{\rangle}}
\newtheorem{thm}{Theorem}[section]
\newtheorem{lemma}[thm]{Lemma}
\newtheorem{prop}[thm]{Proposition}
\newtheorem{cor}[thm]{Corollary}
\newtheorem{prog}[thm]{Program}
\newcommand{\coverpage}[3]{\thispagestyle{empty}
    \addtocounter{page}{-1}
\null\vspace*{-1cm} \hfill\includegraphics[scale=0.2]{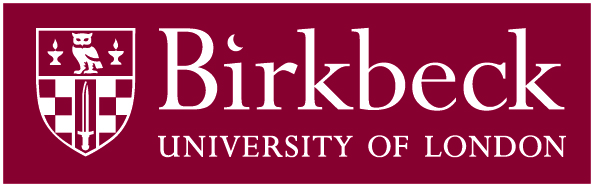} \vskip 2in
\begin{center} \begin{minipage}{0.7\textwidth}\begin{center}\Huge\bf{#1}\end{center} \end{minipage}\end{center}  \vfill
\begin{center} {\large By}\bigskip\\ {\large #2}\\ \end{center} \vfill 
 \framebox{\begin{minipage}{\textwidth}
Birkbeck Pure Mathematics Preprint Series\hfill
Preprint Number #3 \\ \\
\null\hfill www.bbk.ac.uk/ems/research/pure/preprints 
\end{minipage}}
\newpage}
\begin{document}

\coverpage{Locally maximal product-free sets of size 3}{Chimere S. Anabanti and Sarah B. Hart}{10}

\title{Locally maximal product-free sets of size 3}
\author{Chimere S. Anabanti\thanks{The first author is supported by a Birkbeck PhD Scholarship}\\ c.anabanti@mail.bbk.ac.uk \and Sarah B. Hart\\ s.hart@bbk.ac.uk}  

\date{April 2015}

\maketitle

\begin{abstract}
\noindent Let $G$ be a group, and $S$ a non-empty subset of $G$. Then $S$ is \emph{product-free} if $ab\notin S$ for all $a, b \in S$. We say $S$ is \emph{locally maximal product-free} if $S$ is product-free and not properly contained in any other product-free set. A natural question is what is the smallest possible size of a locally maximal product-free set in $G$. The groups containing locally maximal product-free sets of sizes $1$ and $2$ were classified in \cite{gh}. In this paper, we prove a conjecture of Giudici and Hart in \cite{gh} by showing that if $S$ is a locally maximal product-free set of size $3$ in a group $G$, then $|G| \leq 24$. This shows that the list of known locally maximal product-free sets given in \cite{gh} is complete.
\end{abstract}

\section{Introduction}

\noindent Let $G$ be a group, and $S$ a non-empty subset of $G$. Then $S$ is
\emph{product-free} if $ab\notin S$ for all $a, b \in S$. For example,
if $H$ is a subgroup of $G$ then $Hg$ is a product-free set for any
$g\notin H$. Traditionally these sets have been studied in abelian groups, and have therefore been called sum-free sets. Since we are working with arbitrary groups it makes more sense to say `product-free' in this context. We say $S$ is \emph{locally maximal product-free} if $S$ is
product-free and not properly contained in any other product-free set.
We use the term {\em locally maximal} rather than maximal because 
the majority of the literature in this area uses {\em maximal} to mean maximal by cardinality (for example \cite{streetwhite,SW1974A}).\\

There are some obvious questions from the definition: given a group $G$, what is the maximum cardinality of a product-free set in $G$, and what are the maximal (by cardinality) product-free sets? How many product-free sets are there in $G$? Given that each product-free set is contained in a locally maximal product-free set, what are the locally maximal product-free sets? What are the possible sizes of locally maximal product-free sets? The question of maximal (by cardinality) product-free sets has been fully solved for abelian groups by Green and Rusza \cite{greenruzsa}. For the nonabelian case Kedlaya \cite{kedlaya97} showed that there exists a constant $c$ such that the largest product-free set in a group of order $n$ has size at least $cn^{11/14}$. Gowers \cite{gowers} proved that if the smallest nontrivial representation of $G$ is of dimension $k$ then the largest product-free set in $G$ has size at most $k^{-1/3}n$ (Theorem 3.3 and commentary at the start of Section 5). Much less is known about the minimum sizes of locally maximal product-free sets. This question was first asked in \cite{babaisos} where the authors ask what is the minimum size of a locally maximal product-free set in a group of order $n$? A good bound is still not known. Small locally-maximal product-free sets when $G$ is an elementary abelian 2-group are of interest in finite geometry, because they correspond to complete caps in PG($n-1,2$). In \cite{gh}, the groups containing locally maximal product-free sets of sizes $1$ and $2$ were classified. Some general results were also obtained. Furthermore, there was a classification (Theorem $5.6$) of groups containing locally maximal product-free sets $S$ of size $3$ for which not every subset of size $2$ in $S$ generates $\la S \ra$. Each of these groups has order at most $24$. Conjecture $5.7$ of \cite{gh} was that if $G$ is a group of order greater than $24$, then $G$ does not contain a locally maximal product-free set of size $3$. Table $5$ listed all the locally maximal product-free sets in groups of orders up to $24$. So the conjecture asserts that this list is the complete list of all such sets. We have reproduced Table $5$ as Table \ref{tab1} in this paper because we need to use it in some of the arguments here. The main result of this paper is the following and its immediate corollary.

\begin{thm}
\label{msf3}
Suppose $S$ is a locally maximal product-free set of size 3 in a group $G$, such that every two element subset of $S$ generates $\langle S\rangle$. Then $|G| \leq 24$.
\end{thm}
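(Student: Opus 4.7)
Let $S=\{a,b,c\}$. First I would reduce to the case $G=\langle S\rangle$: since $S$ is also locally maximal product-free inside $\langle S\rangle$, if $\langle S\rangle<G$ every element of $G\setminus\langle S\rangle$ must square into $S$ and the left/right multiples $sg,gs$ must avoid $S\cup\{g\}$; this forces $G/\langle S\rangle$ to have exponent $2$ with extremely restricted action, and is handled by a short separate argument. From now on I assume $G=\langle a,b\rangle=\langle a,c\rangle=\langle b,c\rangle$.

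The main tool is the standard covering identity for locally maximal product-free sets. Setting $T(S)=\{g\in G:g^2\in S\}$, the fact that $S\cup\{g\}$ fails to be product-free for every $g\in G\setminus S$ gives
\[
G \;=\; S\cup SS\cup SS^{-1}\cup S^{-1}S\cup T(S).
\]
Since $|S|=3$, we have $|SS|\le 9$ and $|SS^{-1}|,|S^{-1}S|\le 7$ (the three diagonal products $ss^{-1}$ collapse to the identity), and $1$ lies in both $SS^{-1}$ and $S^{-1}S$, giving the rough estimate $|G|\le 25+|T(S)|$. To reach the target $24$ I will need both to extract further coincidences among the covering sets and to bound $|T(S)|$.

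I would then run a case analysis organised around two features of $S$: the number of involutions in $S$ (an integer in $\{0,1,2,3\}$) and whether $S\cap S^{-1}=\emptyset$, with sub-cases for which of the non-trivial relations ``$a^2=b^{\pm}$'', ``$ab=c^{\pm}$'', ``$ab=ba$'', etc.\ hold. Each such relation propagates into a cascade of coincidences among the nine entries of the Cayley-type tables for $SS$, $SS^{-1}$ and $S^{-1}S$, so the covering bound drops well below $25$. At the same time, each relation imposes strong structural constraints on $G$; combined with the hypothesis that any two of $a,b,c$ generate $G$, these force $G$ to be one of the groups appearing in Table~\ref{tab1}, each of order at most $24$.

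The main obstacle is the ``generic'' sub-case in which $S$ has no involutions, $S\cap S^{-1}=\emptyset$, and no non-trivial equality holds among the entries of $SS\cup SS^{-1}\cup S^{-1}S$. Here the crude covering is essentially tight, so the burden falls on controlling $T(S)$ directly. For each $g\in T(S)$ with $g^2=s\in S$, the plan is to apply local maximality at both $g$ and $g^{-1}$ to force product relations tying $g$ to $SS^{\pm 1}$; using that any two of $a,b,c$ generate $G$, these relations then pin $g$ down to a controllable small set whose overlap with $SS\cup SS^{-1}\cup S^{-1}S$ supplies the final saving needed to bring $|G|$ down from $25$ to $24$. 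Carrying out this enumeration, together with the small auxiliary case $\langle S\rangle<G$, will be the technical heart of the proof.
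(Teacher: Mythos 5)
Your covering identity is the right starting point (it is Lemma~\ref{3.1} of the paper, though note your $T(S)$ is the paper's $\sqrt S$, and the paper reserves $T(S)$ for $S\cup SS\cup SS^{-1}\cup S^{-1}S$), and organising the cases by the number of involutions in $S$ is indeed how the paper proceeds. But the two places where you defer the work are exactly where the difficulty lives, and as sketched neither deferral can be discharged in the way you suggest. First, the reduction to $G=\langle S\rangle$ is not a ``short separate argument'': the index of $\langle S\rangle$ in $G$ really can exceed $1$ (e.g.\ $Q_{24}$ contains such an $S$ with $\langle S\rangle\cong C_{12}$, and in the extremal $\mathrm{Alt}(4)$ case one must still rule out $|G|=48$). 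The paper controls this via normality of $\langle S\rangle$, the bound $|G|\le 2|T(S)|\cdot|\langle S\rangle|$, and the set $\hat S$ of elements of $S$ with square roots outside $\langle S\rangle$ --- every such element has even order with all its odd powers in $S$, which is what forces $|\hat S|\le 1$ and hence $|G|\le 2|\langle S\rangle|$ in the key cases. Nothing in your sketch substitutes for this.

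Second, your plan for the generic case rests on ``applying local maximality at $g$'' for $g\in\sqrt S$, but this yields no information: such a $g$ already satisfies the non-extendability condition precisely because $g^2\in S$, so no product relation is forced. Only the $g^{-1}$ half of your idea has content (and only when $(g^2)^{-1}\notin S$), and even then it places $\sqrt S$ inside a set of size about $25$, leaving you to prove massive coincidences to get from roughly $50$ down to $24$. The paper avoids this entirely: when $S$ has no involutions and $S\cap S^{-1}=\varnothing$ it invokes the bound $|G|\le 4|S|^2+1=37$ together with the exhaustive computational classification of all locally maximal product-free $3$-sets in groups of order up to $37$ (Theorem~\ref{5.1}/Table~\ref{tab1}); and in the one-involution case it shows by elimination that $a,b$ must have order $3$ and that $\langle S\rangle$ satisfies the presentation $\langle a,c: a^3=c^2=(ac)^3=1\rangle\cong\mathrm{Alt}(4)$, after which $|\hat S|\le 1$ gives $|G|\le 24$. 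Your proposal never identifies this $\mathrm{Alt}(4)$ endpoint, never uses the computational data, and so the ``technical heart'' you postpone is not a finishable enumeration along the lines you describe; you would need to import the $\hat S$ machinery and the small-group classification (or reprove them) to close the argument.
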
 

\begin{cor}
If a group $G$ contains a locally maximal product-free set $S$ of size 3, then $|G| \leq 24$ and the only possibilities for $G$ and $S$ are listed in Table~\ref{tab1}.
\end{cor}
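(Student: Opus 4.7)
The corollary follows at once from Theorem~\ref{msf3} by combining it with Theorem~$5.6$ of \cite{gh}, which already establishes $|G|\le 24$ in the complementary case where some two-element subset of $S$ fails to generate $\langle S\rangle$; the explicit list is then read off Table~\ref{tab1}. So the real content is Theorem~\ref{msf3}, and the rest of this plan addresses it.

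The starting point is the standard covering characterisation of local maximality: a product-free $S$ is locally maximal if and only if every non-identity element of $G$ lies in $S\cup SS\cup S^{-1}S\cup SS^{-1}\cup\sqrt{S}$, where $\sqrt{S}=\{x\in G:x^2\in S\}$. Writing $S=\{a,b,c\}$, the crude bound is $|G|\le 1+3+27+|\sqrt{S}|$, visibly too weak. I would sharpen it by exploiting (i) the product-free condition, which makes $SS$, $S^{-1}S$, $SS^{-1}$, $\sqrt{S}$ all disjoint from $S$; (ii) the hypothesis $\langle a,b\rangle=\langle a,c\rangle=\langle b,c\rangle=G$, which forces $G$ to be two-generated in three different ways and cuts down the admissible orders of $a,b,c$; and (iii) coincidences among the nine products $\{a^2,b^2,c^2,ab,ba,ac,ca,bc,cb\}$, which proliferate in small groups and shrink $|SS|$ well below~$9$.

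The argument would then proceed by case analysis on element orders and commutation. In the abelian subcase, $G$ is a two-generated finite abelian group and a short numerical check dispatches matters. In the non-abelian case I would separate out the regime where some $a\in S$ has small order, where the structure of $\langle a\rangle$ together with the mutual-generation hypothesis forces $G$ to be close to dihedral, semidihedral, or a small nilpotent group, from the regime where all orders are at least $4$, where $|\langle a\rangle|+|\langle b\rangle|+|\langle c\rangle|$ is already large enough to saturate the cover. Each squaring equation $a^2=\cdot$ either places a constraint on membership of $SS\cap(SS^{-1}\cup S^{-1}S)$ or identifies $a$ itself as a square root, producing useful overlaps that reduce the right-hand side of the cover equation.

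The main obstacle I anticipate is controlling $|\sqrt{S}|$: in groups with large $2$-torsion a single element of $S$ can have many square roots, and excluding such behaviour requires showing that the presence of a heavily-rooted element of $S$ is incompatible with both the cover equation and the mutual-generation condition. Beyond that, the bookkeeping of how the nine products of $S$ can coincide (equations such as $ab=c^2$ or $ab=ca$) is genuinely combinatorial, and I expect the proof to reduce to a finite list of numerical configurations (orders of $a,b,c$ together with the list of coincidences among the nine products), each eliminated either by a direct contradiction with product-freeness or by exhibiting an element of $G$ outside the cover.
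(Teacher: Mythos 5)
Your deduction of the corollary itself is exactly the paper's: the case where some two-element subset of $S$ fails to generate $\langle S\rangle$ is covered by Theorem 5.6 of \cite{gh}, the remaining case by Theorem~\ref{msf3}, and the explicit list is Table~\ref{tab1} (which is Table 5 of \cite{gh}, valid up to order $37$). That part is correct and identical in approach.

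Since most of your text is a plan for Theorem~\ref{msf3}, a comparison is in order, because the paper's route is quite different from the raw covering-and-coincidences analysis you sketch, and your plan as stated has a real gap. The paper does not try to bound $|G|$ directly from $G = T(S)\cup\sqrt S$; it first bounds $|\langle S\rangle|$ and then, separately, bounds the index $[G:\langle S\rangle]$ using the structural results quoted as Theorem~\ref{key}: $\langle S\rangle$ is normal with elementary abelian $2$-group quotient, and the set $\hat S$ of elements of $S$ having square roots outside $\langle S\rangle$ controls that quotient (parts (iii) and (v)). This second step is indispensable --- for instance $Q_{24}$ contains such an $S$ with $\langle S\rangle\cong C_{12}$ of index $2$ --- and your plan, which works entirely inside the cover equation for $G$, never addresses it; this is also precisely the mechanism that tames your worry about $|\sqrt S|$ being large. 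Concretely, the paper's case division is: $\langle S\rangle$ cyclic (handled by $|T(S)|\le 16$ plus the two-square-roots property of cyclic groups, then the $\hat S$ machinery); $S$ with no involutions and $S\cap S^{-1}=\varnothing$ (handled by the bound $|G|\le 4|S|^2+1=37$ together with the \emph{computational} classification up to order $37$, i.e.\ Theorem~\ref{5.1}, not by hand); two or more involutions (a dihedral analysis); and finally exactly one involution, where Lemma~\ref{order3} forces the other two elements to have order $3$ and the relation $(ac)^3=1$ pins down $\langle S\rangle\cong\mathrm{Alt}(4)$ with $|\hat S|\le 1$. If you want to avoid the computer-checked table and the $\hat S$ apparatus, your ``finite list of numerical configurations'' would have to be actually enumerated and closed off, including the index-bounding step; as written it is a programme rather than a proof.
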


\begin{proof}
If not every two-element subset of $S$ generates $\langle S\rangle$, then by Theorem $5.6$ of \cite{gh}, $|G| \leq 24$. We may therefore assume that every two-element subset of $S$ generates $\langle S\rangle$. Then $|G| \leq 24$ by Theorem \ref{msf3}. Now Table \ref{tab1} is just Table $5$ of \cite{gh}; it is a list of all locally maximal product-free sets of size $3$ occurring in groups of order up to $24$ (in fact, up to $37$ in the original paper). Since we have shown that all locally maximal product-free sets of size 3 occur in groups of order up to $24$, this table now constitutes a complete list of possibilities. 
\end{proof}

\noindent We finish this section by establishing the notation to be used in the rest of the paper, and giving some basic results from \cite{gh}. For subsets $A, B$ of a group $G$, we use the standard notation $AB$ for the product of $A$ and $B$. That is,
$$AB = \{ab : a \in A, b \in B\}.$$
%The restricted product $A \cdot B$ of subsets $A, B$ of $G$ is
%given by
%$$A \cdot B = \{ab: a \in A, b \in B, a \neq b\}.$$
%
\noindent By definition, a nonempty set $S \subseteq G$ is product-free if and
only if $S \cap SS = \varnothing$. In order to investigate locally maximal
product-free sets, we introduce some further notations. For a set $S \subseteq G$, we define the following sets:
\vspace*{-1mm}
\begin{eqnarray*}
S^2 &=& \{a^2: a \in S\};\\
S^{-1} &=& \{a^{-1}: a \in S\};\\
\sqrt S &=& \{x \in G: x^2 \in S\};\\
T(S) &=& S \cup SS \cup SS^{-1} \cup S^{-1}S;\\
\hat S &=& \{s \in S : \sqrt{\{s\}}\not\subset \la S
\ra\}.
\end{eqnarray*}

\noindent For a singleton set $\{a\}$, we usually write $\sqrt a$ instead of $\sqrt{\{a\}}$.\\

\noindent For a positive integer $n$, we will denote by $\mathrm{Alt}(n)$ the alternating group of degree $n$, by $C_n$ the cyclic group of order $n$, by $D_{2n}$ the dihedral group of order $2n$, and by $Q_{4n}$ the dicyclic group of order $4n$ given by $Q_{4n}:= \langle x,y: x^{2n} = 1, x^n = y^2, yx = x^{-1}y\rangle$.\\

We finish this section with a few results from \cite{gh}.

\begin{lemma}\label{3.1}
\cite[Lemma 3.1]{gh} Suppose $S$ is a product-free set in the group $G$. Then $S$ is locally maximal product-free if and only if $G = T(S) \cup \sqrt S$.
\end{lemma}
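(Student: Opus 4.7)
The plan is to prove the biconditional by handling each direction contrapositively, using the identification of $T(S) \cup \sqrt{S}$ as exactly the set of obstructions to enlarging $S$ by a single element.

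For the forward direction, suppose $S$ is locally maximal product-free and, for contradiction, pick $g \in G$ with $g \notin T(S) \cup \sqrt{S}$. I will show that $S \cup \{g\}$ is product-free, contradicting local maximality. Since $S \subseteq T(S)$, the assumption gives $g \notin S$, so this is a proper extension. It then suffices to check that no product $xy$ with $x,y \in S \cup \{g\}$ lies in $S \cup \{g\}$. I will split into four cases ($x,y \in S$; $x \in S$, $y = g$; $x = g$, $y \in S$; and $x = y = g$) and match each to one of the four constituents of $T(S) \cup \sqrt{S}$. Specifically, $g \notin SS$ kills $s_1 s_2 = g$; $g \notin S^{-1}S$ kills $sg \in S$; $g \notin SS^{-1}$ kills $gs \in S$; $g \notin \sqrt{S}$ kills $g^2 \in S$; and product-freeness of $S$ kills $s_1 s_2 \in S$.

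For the backward direction, suppose $G = T(S) \cup \sqrt{S}$ but $S$ is not locally maximal, so some product-free $S'$ properly contains $S$. Pick $g \in S' \setminus S$. Applying the hypothesis to $g$, I will derive a contradiction by showing each of the five possibilities $g \in S$, $g \in SS$, $g \in SS^{-1}$, $g \in S^{-1}S$, $g \in \sqrt{S}$ violates product-freeness of $S'$. For instance, $g = s_1 s_2^{-1} \in SS^{-1}$ yields $s_1 = g s_2$ with $g, s_2 \in S'$ and $s_1 \in S'$, contradicting $S' \cap S'S' = \varnothing$; the other cases are analogous, with $g \in \sqrt{S}$ giving $g^2 \in S \subseteq S'$ while $g^2 \in S'S'$.

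The bookkeeping is straightforward; the only subtlety is handling the identity. I need to observe that $1 \notin S$ (otherwise $1 \cdot 1 = 1 \in S$ would violate product-freeness), so in the forward direction the spurious cases $sg = g$ and $gs = g$ (forcing $s = 1$) cannot occur. Similarly $g \neq 1$ there, since $1 = s s^{-1} \in S^{-1}S \subseteq T(S)$ for any $s \in S$. These small observations dispose of the degenerate possibilities, after which the equivalence follows essentially by definition-chasing and the case analysis above.
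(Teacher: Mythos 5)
Your proof is correct and complete, including the careful handling of the degenerate cases $g=1$ and $s=1$. The paper itself gives no proof of this lemma --- it simply cites \cite[Lemma 3.1]{gh} --- and your definition-chasing argument (identifying $T(S)\cup\sqrt S$ as exactly the obstructions to adjoining a single element to $S$) is the standard one used there, so there is nothing to add.
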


The next result lists, in order, Proposition 3.2, Theorem 3.4, Propositions 3.6, 3.7, 3.8 and Corollary 3.10 of \cite{gh}.
\begin{thm}\label{key} Let $S$ be a locally maximal product-free set in a group $G$. Then 
\begin{enumerate}
\item[(i)] $\la S \ra$ is a normal
subgroup of $G$ and $G/\la S \ra$ is either trivial or
an elementary abelian 2-group;

\item[(ii)] $|G| \leq 2|T(S)|\cdot|\la S \ra|$;
\item[(iii)] if $\la S \ra$ is not an elementary abelian 2-group and $|\hat S| =
1$, then $|G| = 2|\la S \ra|$;
\item[(iv)] every
element $s$ of $\hat S$ has even order, and all odd powers of
$s$ lie in $S$;
\item[(v)] if there exists $s \in S$ and integers $m_1, \ldots, m_t$ such
that $\hat S = \{s, s^{m_1}, \ldots, s^{m_t}\},$ then $|G|$
divides $4|\la S\ra|$;
\item[(vi)] if $S\cap S^{-1} = \varnothing$, then $|G| \leq 4|S|^2+1$.
\end{enumerate}
\end{thm}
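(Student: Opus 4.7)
The plan is to reduce each of the six conclusions to the single structural identity $G=T(S)\cup\sqrt S$ of Lemma~\ref{3.1}, proving the six parts roughly in the order (i), (ii), (iv), (iii), (v), (vi), since later parts lean on earlier ones.

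First I would handle (i). The opening move is that for any $g\in G$, either $g\in T(S)\subseteq\la S\ra$, or $g\in\sqrt S$ with $g^2\in S\subseteq\la S\ra$; so every element of $G$ squares into $\la S\ra$. To promote this to normality of $\la S\ra$, I would take $g\in\sqrt S$ (so $g^2=s\in S$) and $t\in S$ and try to show $gtg^{-1}\in\la S\ra$ by substituting $g^{-1}=s^{-1}g$ and then splitting the resulting product via $G=T(S)\cup\sqrt S$, ruling out the $\sqrt S$-case using product-freeness of $S$. With $\la S\ra$ normal, $G/\la S\ra$ has exponent dividing $2$ and is therefore elementary abelian.

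Part (ii) is a coset count: by (i), $G\setminus\la S\ra$ is a disjoint union of cosets of $\la S\ra$ each contained in $\sqrt S$, and the map $y\mapsto y^2$ from such a coset to $S$, together with a bound on square-root multiplicities inside a fixed coset, delivers $|G|\leq 2|T(S)|\cdot|\la S\ra|$. For (iv), the crux of the $\hat S$ theory, I would take $x\in\sqrt{\{s\}}\setminus\la S\ra$ for $s\in\hat S$: since $\la x^2\ra=\la s\ra\subseteq\la S\ra$, the element $x$ cannot have odd order (else $x\in\la x^2\ra\subseteq\la S\ra$, contradiction), so $x$ has even order and therefore so does $s=x^2$; the odd powers of $s$ are squares $x^{2m}\in\la S\ra$, and local maximality combined with product-freeness forces these odd powers to lie in $S$.

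Parts (iii), (v), (vi) then follow by combining (i)--(iv) with coset bookkeeping. For (iii), $|\hat S|=1$ together with $\la S\ra$ not an elementary abelian $2$-group confines the non-trivial cosets of $\la S\ra$ in $G$ to a single one, forcing $[G:\la S\ra]=2$. For (v), having $\hat S$ sit inside a single cyclic subgroup of $\la S\ra$ restricts the relevant cosets to at most two, so $[G:\la S\ra]$ divides $4$. For (vi), under $S\cap S^{-1}=\varnothing$ the four sets $S$, $SS$, $SS^{-1}$, $S^{-1}S$ are essentially disjoint (each intersection must be controlled by product-freeness), giving $|T(S)|\leq 4|S|^2$, and a short argument confines $\sqrt S\setminus T(S)$ to at most the identity, yielding $|G|\leq 4|S|^2+1$. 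The main obstacle I foresee is (i), specifically extracting normality of $\la S\ra$ from the bare decomposition $G=T(S)\cup\sqrt S$; this is the hinge on which every subsequent coset-index inequality rests.
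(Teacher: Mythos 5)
First, a point of reference: the paper does not prove this theorem. It is explicitly imported as Proposition 3.2, Theorem 3.4, Propositions 3.6--3.8 and Corollary 3.10 of \cite{gh}, so there is no in-paper proof to measure your sketch against; it has to stand on its own. The engine you identify is the right one: Lemma \ref{3.1} together with $T(S)\subseteq\la S\ra$ shows every $g\notin\la S\ra$ lies in $\sqrt S$ and hence squares into $S$. For (i), though, your case analysis is inverted. For $g\notin\la S\ra$ and $t\in\la S\ra$ one does not ``rule out'' the $\sqrt S$-case by product-freeness; one \emph{forces} it for the auxiliary element $gt$ (since $gt\notin\la S\ra\supseteq T(S)$), and then $(gt)^2=gtgt\in S\subseteq\la S\ra$ combined with $g^2\in\la S\ra$ gives $g t g^{-1}=g^{-2}\,(gtgt)\,t^{-1}\cdot$(an element of $\la S\ra$)$\,\in\la S\ra$. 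Applying the dichotomy to $gtg^{-1}$ itself and trying to exclude $gtg^{-1}\in\sqrt S$, as you describe, does not obviously produce a contradiction: $gt^2g^{-1}\in S$ is not forbidden by product-freeness.

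The more serious gaps are in (iii)--(vi) and in one step of (iv). In (iv), ``$x$ has even order and therefore so does $s=x^2$'' is a non sequitur: an $x$ of order $6$ has $x^2$ of order $3$. The correct order of deduction is the reverse of yours: first prove that all odd powers of $s$ lie in $S$ (for $x\in\sqrt s\setminus\la S\ra$ and any $m$, $xs^m\notin\la S\ra$, hence $xs^m\in\sqrt S$, and since $x$ commutes with $s=x^2$ one gets $(xs^m)^2=s^{2m+1}\in S$), and then conclude $s$ has even order because otherwise $1=s^{o(s)}$ would be an odd power of $s$ lying in $S$, which is impossible. For (ii), (iii) and (v), your sentences restate the conclusions rather than prove them: ``a bound on square-root multiplicities inside a fixed coset'' does not explain where the factor $|T(S)|$ comes from; ``confines the non-trivial cosets to a single one'' and ``restricts the relevant cosets to at most two'' are exactly the assertions to be established (for (iii), for instance, one must show that index at least $4$ together with $|\hat S|=1$ forces $\la S\ra$ to be an elementary abelian $2$-group, which requires genuine work with the relations $x^2=y^2=(xy)^2=s$). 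For (vi), the estimate $|T(S)|\leq 3|S|^2+|S|$ is immediate, but the claim that $\sqrt S\setminus T(S)$ contains at most one element is the entire content of the result and is asserted, not argued. As it stands the proposal is a reasonable table of contents for the Giudici--Hart proofs rather than a proof.
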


We require one final result.
\begin{thm}\cite[Theorem 5.1]{gh} \label{5.1}
Up to isomorphism, the only instances of
locally  maximal product-free sets $S$ of size 3 of a group $G$ where
$|G| \leq 37$ are given in Table \ref{tab1}.
\end{thm}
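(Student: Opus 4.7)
The final statement is a classification: for each isomorphism type of group $G$ with $|G| \leq 37$, it asserts that all locally maximal product-free sets of size $3$ in $G$ are precisely those listed in Table \ref{tab1}. The plan is a structured finite enumeration, combining the theoretical constraints of Theorem \ref{key} with a case-by-case check that the table is complete.

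First, I would use Theorem \ref{key} to prune the search. The bound $|G| \leq 37$ in the hypothesis matches exactly the bound from Theorem \ref{key}(vi) in the case $S \cap S^{-1} = \varnothing$, so that subcase imposes no restriction beyond the one already assumed. When $S \cap S^{-1} \neq \varnothing$, either $S$ contains an involution or a pair $\{s, s^{-1}\}$ with $s^2 \neq 1$; in either case $1 \in SS$ and hence $1 \notin S$. Combined with part (i) (so $\langle S\rangle \trianglelefteq G$ with $G/\langle S\rangle$ trivial or elementary abelian $2$) and part (ii) (so $|G| \leq 2|T(S)|\cdot|\langle S\rangle|$), this restricts the possible orders of $\langle S\rangle$ and hence of $G$. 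Parts (iii)--(v) sharpen this via divisibility constraints on $|G|$ relative to $|\langle S\rangle|$, and part (iv) eliminates many triples immediately by forcing every element of $\hat S$ to have even order with all odd powers lying in $S$.

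Second, I would enumerate: for each $n \leq 37$, each group $G$ of order $n$, and each candidate normal subgroup $H$ of $G$ with $G/H$ elementary abelian $2$ (possibly trivial), list all 3-element generating subsets $S \subseteq H$, modulo the natural action of $N_G(H)$ on $H$ and, where convenient, of $\mathrm{Aut}(H)$. For each such candidate $S$, check the two defining conditions: product-freeness, $S \cap SS = \varnothing$, and (by Lemma \ref{3.1}) local maximality, $G = T(S) \cup \sqrt S$. Surviving sets are then grouped by the pair $(G,S)$ up to isomorphism and compared against Table \ref{tab1}.

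The principal obstacle is organisational rather than conceptual. There are on the order of two hundred groups of order at most $37$, and the number of candidate 3-subsets per group can be large. The structural results from Theorem \ref{key} are essential for pruning, but the irreducible core of the argument is a careful finite calculation, most cleanly executed in a computer algebra system such as GAP. I expect the hardest cases to verify by hand to be the larger, less symmetric groups—particularly those of order $24$, $32$, and $36$—where $\mathrm{Aut}(G)$ is comparatively small and several isomorphism types of normal subgroup must be examined as possible $\langle S\rangle$; smaller or highly symmetric groups (cyclic, dihedral, dicyclic, or $p$-groups of small rank) are dispatched quickly by the constraints of Theorem \ref{key}(iii)--(v) combined with the inversion analysis.
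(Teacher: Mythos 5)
Your proposal is correct and matches the paper's approach in essence: this theorem is quoted from \cite{gh} and is established by an exhaustive, computer-assisted enumeration, exactly the kind of finite check you describe (the paper's Section on Data and Programs gives the GAP code, which simply runs over all $3$-subsets of each group, testing $S \cap SS = \varnothing$ and then $G = T(S) \cup \sqrt{S}$ via Lemma \ref{3.1}). Your additional pruning via Theorem \ref{key} is sensible for a hand computation but is not used by the paper's program, which relies on brute force alone.
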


\section{Proof of Theorem \ref{msf3}}

\begin{prop}\label{pro1}
Suppose $S$ is locally maximal product-free of size 3 in $G$. If $\langle S\rangle$ is cyclic, then $|G| \leq 24$. 
\label{cyclic}
\end{prop}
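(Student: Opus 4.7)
The plan is to show $|G|\le 37$ in every case, after which Theorem~\ref{5.1} upgrades the bound to $|G|\le 24$. Set $H:=\langle S\rangle$. Since $|S|=3$ and $1\notin S$, $|H|\ge 4$, so $H$ is cyclic but not an elementary abelian $2$-group, and by Theorem~\ref{key}(i), $G/H$ is an elementary abelian $2$-group.

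Two easy reductions dispense with the extremes. If $S\cap S^{-1}=\varnothing$, Theorem~\ref{key}(vi) directly gives $|G|\le 4|S|^{2}+1=37$. If $G=H$, then $G$ is cyclic and $G=T(S)\cup\sqrt{S}$; using $|SS|\le 6$, $|SS^{-1}|\le 7$ (so $|T(S)|\le 16$) together with $|\sqrt{S}|\le 2|S|=6$ (each element has at most two square roots in a cyclic group) yields $|G|\le 22$. Assume henceforth that $S\cap S^{-1}\ne\varnothing$ and $G\ne H$. Every $g\in G\setminus H$ then satisfies $g^{2}\in S$ (because $T(S)\subseteq H$ forces $g\in\sqrt{S}$), so $\hat{S}\ne\varnothing$.

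By Theorem~\ref{key}(iv), each $s\in\hat{S}$ has even order whose odd powers all lie in $S$, forcing the order of $s$ to be $2$, $4$, or $6$. I split on the largest such order. \emph{Order $6$:} $s,s^{3},s^{5}$ are three distinct odd powers all in $S$, so $S=\{s,s^{3},s^{5}\}$ and $H=\langle s\rangle\cong C_{6}$; since $\hat{S}\subseteq\langle s\rangle$, Theorem~\ref{key}(v) gives $|G|\mid 4|H|=24$. \emph{Order $4$:} Product-freeness excludes the involution $s^{2}$ from $S$ (as $s\cdot s=s^{2}$), so $\hat{S}\subseteq\{s,s^{3}\}\subseteq\langle s\rangle$ and (v) gives $|G|\mid 4|H|$. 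The coincidences $s\cdot s^{3}=1$ and $\{1,s^{2}\}\subseteq SS\cap SS^{-1}$ sharpen $|T(S)|\le 12$, so $|H|\le 18$; a coset-action argument (each nontrivial coset of $G/H$ acts on $H$ by one of at most two specific involutions of $\mathrm{Aut}(H)$, whose union with the identity is not a subgroup) rules out $|G/H|=4$, giving $|G|\le 2|H|\le 36\le 37$. \emph{Order $2$ only:} $\hat{S}=\{x^{n/2}\}$, the unique involution of $H\cong C_{n}$ (so $n$ is even), so $|\hat{S}|=1$ and Theorem~\ref{key}(iii) gives $|G|=2n$. Because $x^{-n/2}=x^{n/2}$, the three elements $1,\,x^{a+n/2},\,x^{b+n/2}$ (writing $S=\{x^{a},x^{b},x^{n/2}\}$) lie in both $SS$ and $SS^{-1}$, so $|T(S)|\le 3+(6+7-3)=13$. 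The local-maximality equation $H=T(S)\cup(\sqrt{S}\cap H)$ then forces $n\le 19$, hence $n\le 18$ since $n$ is even, and $|G|=2n\le 36\le 37$.

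The main obstacle is spotting the forced overlaps between $SS$ and $SS^{-1}$ in the last two cases; without them the generic bound $|T(S)|\le 16$ would yield only $|G|\le 44$, which is outside the range of Theorem~\ref{5.1}. The supporting coset-action bookkeeping in the order-$4$ case is a second potential nuisance, but follows routinely from classifying the involutions $r$ of $\mathrm{Aut}(C_n)$ with $\gcd(1+r,n)\ge n/2$.
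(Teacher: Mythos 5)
Your proof is correct, but it is organised quite differently from the paper's. The paper first bounds $|\langle S\rangle|\le 22$ by the $|T(S)|\le 16$ count plus square roots, then invokes Theorem~\ref{5.1} to pin down $\langle S\rangle$ as one of eight specific cyclic groups, and finally runs through the even-order ones using the explicit sets $S$ listed in Table~\ref{tab1} to show $|\hat S|\le 1$ or that $\hat S$ consists of powers of one element, whence Theorem~\ref{key}(iii)/(v) bounds $|G|$. You instead never identify $\langle S\rangle$: you show $\hat S\neq\varnothing$ once $G\neq\langle S\rangle$ and $S\cap S^{-1}\neq\varnothing$, observe that Theorem~\ref{key}(iv) forces elements of $\hat S$ to have order $2$, $4$ or $6$, and push each case to $|G|\le 37$ before applying Theorem~\ref{5.1} a single time at the end. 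What your route buys is independence from the detailed table entries for $C_6$, $C_8$, $C_{10}$, $C_{12}$; what it costs is the extra work in the order-$4$ case, where Theorem~\ref{key}(v) only gives $|G|\mid 4|\langle S\rangle|$ and you must exclude index $4$ by hand. That coset-action step is the one place your write-up is only a sketch: the essential (and unstated) point is that if $g\notin\langle S\rangle$ induces $x\mapsto x^r$ then $(x^kg)^2=x^{k(1+r)}g^2$ must lie in $\{s,s^{-1}\}$ for every $k$, which forces $\gcd(1+r,n)\ge n/2$, in particular rules out the trivial action for $n\ge 8$ and leaves at most two admissible involutions whose product is a third, non-admissible automorphism — so $G/\langle S\rangle$ cannot embed a Klein four-group. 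I checked that this classification does work for the relevant $n\in\{8,12,16\}$ (and that $n=4$ is impossible since $S$ would have to contain $s^2$), and your sharpened bounds $|T(S)|\le 12$ and $|T(S)|\le 13$ in the order-$4$ and order-$2$ cases are valid (indeed generous), so the argument is complete once that step is written out.
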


\begin{proof} Write $S = \{a,b,c\}$. 
First note that since $\langle S\rangle$ is abelian, $SS^{-1} = S^{-1}S$; moreover $aa^{-1} = bb^{-1} = cc^{-1} = 1$; so $|SS^{-1}| \leq 7$. Also $SS \subseteq \{a^2, b^2, c^2, ab, ac, bc\}$. Thus $$|T(S)| = |S \cup SS \cup SS^{-1}| \leq 3 + 6 + 7 = 16.$$ By Lemma \ref{3.1}, $G = T(S) \cup \sqrt S$; so $\langle S \rangle = T(S) \cup (\langle S\rangle \cap \sqrt S)$. Elements of cyclic groups have at most two square roots. Therefore $|\langle S\rangle | \leq 16 + 6 = 22$. By Table \ref{tab1}, $\langle S\rangle$ must now be one of $C_6$, $C_8$, $C_9$, $C_{10}$, $C_{11}$, $C_{12}$, $C_{13}$ or $C_{15}$. Theorem \ref{key}(iv) tells us that every element $s$ of $\hat S$ has even order and all odd powers of $s$ lie in $S$. This means that for $C_{9}$, $C_{11}$, $C_{13}$ or $C_{15}$, we have $\hat S = \varnothing$ and so $G = \langle S \rangle$. In particular, $|G|\leq 24$.\\

 It remains to consider $C_6$, $C_8$, $C_{10}$ and $C_{12}$. For $C_6 = \langle g: g^6 = 1\rangle$, the unique locally maximal product-free set of size $3$ is $S = \{g, g^3, g^5\}$. Now if $g$ or $g^5$ is contained in $\hat S$, then $\hat S$ consists of powers of a single element; so by Theorem~\ref{key}(v), $|G|$ divides $24$. If neither $g$ nor $g^5$ is in $\hat S$, then $|\hat S| \leq 1$, and so by Theorem~\ref{key}(iii) therefore, $|G|$ divides $12$. In $C_8$ there is a unique (up to group automorphisms) locally maximal product-free set of size $3$, and it is $\{g, g^{-1}, g^4\}$, where $g$ is any element of order $8$. If $\hat S$ contains $g$ or $g^{-1}$, then $S$ contains all odd powers of that element by Theorem~\ref{key}(iv), and hence $S$ contains $\{g, g^3, g^5, g^7\}$, a contradiction. Therefore $|\hat S| \leq 1$ and so $|G|$ divides $16$. Next, we consider $\langle S \rangle = C_{10}$. Recall that elements of $\hat S$ must have even order. If $\hat S$ contains any element of order 10, then $S$ contains all five odd powers of this element, which is impossible by Theorem~\ref{key}(iv). This leaves only the involution of $C_{10}$ as a possible element of $\hat S$. Hence again $|\hat S|\leq 1$ and $|G|$ divides $20$. Finally we look at $C_{12}$. If $\hat S$ contains any element of order $12$, then $|S| \geq 6$, a contradiction. If $\hat S$ contains an element $x$ of order 6 then $S$ contains all three of its odd powers, so $S = \{x, x^3, x^5\}$. But then $\langle S \rangle \cong C_6$, contradicting the assumption that $\langle S \rangle = C_{12}$. Therefore, $\hat S$ can only contain elements of order $2$ or $4$. Up to group automorphism, we see from Table \ref{tab1} that every locally maximal product-free set $S$ of size $3$ in $C_{12}$ with $\langle S \rangle = C_{12}$ is one of $\{g,g^6,g^{10}\}$ or $\{g,g^3,g^8\}$ for some generator $g$ of $C_{12}$. Each of these sets contains exactly one element of order $2$ or $4$. Therefore in every case, $|\hat S| \leq 1$ and so $|G|$ divides $24$. This completes the proof.
\end{proof}

Note that the bound on $|G|$ in Proposition \ref{pro1} is attainable. For example in $Q_{24}$ there is a locally maximal product-free set $S$ of size $3$, with $\langle S \rangle \cong C_{12}$.

\begin{prop}\label{1inv}
Suppose $S$ is locally maximal product-free of size $3$ in $G$ such that every $2$-element subset of $S$ generates $\langle S \rangle$. Then either $|G| \leq 24$ or $S$ contains exactly one involution.
\end{prop}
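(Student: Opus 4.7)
The plan is to prove the contrapositive: assume the number of involutions in $S=\{a,b,c\}$ is $0$, $2$, or $3$, and show in each case that either $|G|\le 24$ or the case is vacuous. If $S$ has no involutions and $S\cap S^{-1}=\varnothing$, Theorem \ref{key}(vi) gives $|G|\le 4|S|^2+1=37$, and Theorem \ref{5.1} then yields $|G|\le 24$. Otherwise no element of $S$ is self-inverse but two distinct elements of $S$ are mutual inverses; these generate a cyclic subgroup that, by hypothesis, equals $\la S\ra$, so Proposition \ref{cyclic} gives $|G|\le 24$.

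For three involutions, $\la S\ra=\la a,b\ra$ is dihedral $D_{2n}$, and the third involution $c$ cannot be a central involution (otherwise $\la a,c\ra$ is abelian of order at most $4$, forcing $n\le 2$, but then there is no room for three distinct reflections forming a product-free set). Hence $S$ consists of three reflections. Since no element of $D_{2n}$ squares to a reflection, $\sqrt S\cap\la S\ra=\varnothing$, so Lemma \ref{3.1} gives $\la S\ra\subseteq T(S)=S\cup SS$; counting reflections in $T(S)$ yields $n\le 3$, so $\la S\ra\cong S_3$ and $S$ is the set of three transpositions. To pass from $\la S\ra$ to $G$, any $g\in G\setminus\la S\ra$ satisfies $g^2\in S$; using $\mathrm{Aut}(S_3)=\mathrm{Inn}(S_3)$, one writes $g=c\pi(g)$ with $c\in C_G(\la S\ra)$ and $\pi(g)\in S_3$. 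Since $c$ commutes with $\pi(g)$ and $C_G(\la S\ra)\cap\la S\ra=Z(S_3)=1$, one obtains $c^2=1$ and $g^2=\pi(g)^2$, which is either $1$ or a $3$-cycle and hence never a transposition. So $G=S_3$ and $|G|=6\le 24$.

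The main obstacle is the two-involution case. Here $a,b$ are the involutions, $c^{-1}\notin S$ (else $c$ would be an involution), and $\la S\ra=\la a,b\ra=D_{2n}$ with $c=r^k$ a non-involutory rotation. The hypothesis $\la a,c\ra=\la S\ra$ forces $\gcd(k,n)=1$, and product-freeness excludes $k\equiv 0,\pm 1\pmod n$. A direct computation shows that the rotations in $T(S)$ all lie in $\{1,r^{\pm 1},r^k,r^{2k}\}$, while $\sqrt S\cap\la r\ra$ contributes at most one additional rotation (the solution of $r^{2a}=r^k$ in $\la r\ra$, which exists only when $n$ is odd). Hence $n\le 6$; combined with the coprimality and exclusion constraints this leaves only $n=5$ with $k\in\{2,3\}$. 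In each of these subcases the rotation $r^{-k}$ lies in $\la S\ra\subseteq G$ but neither in $T(S)$ (by the list above) nor in $\sqrt S$ (its square $r^{-2k}$ is not in $S$ because $3k\not\equiv 0\pmod 5$), contradicting Lemma \ref{3.1}. So the two-involution case is vacuous, and the proposition follows.
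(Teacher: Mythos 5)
Your proof is correct, and in two of the three cases it takes a genuinely different route from the paper. The no-involution case is identical (Theorem \ref{key}(vi) plus Theorem \ref{5.1}, or Proposition \ref{cyclic} when $S\cap S^{-1}\neq\varnothing$). For the cases with at least two involutions, the paper works with the reflection coset $\la ab\ra a$: since reflections cannot lie in $\sqrt S$, that whole coset sits inside $T(S)$, which forces $m\le 6$; the paper then disposes of the ``two reflections plus a rotation'' configuration by consulting Table \ref{tab1}, and for the surviving $D_6$ case bounds $|G|$ via Theorem \ref{key}(i),(ii) (giving $|G|\in\{6,12,24,48\}$) and kills $48$ with a Sylow $2$-subgroup count. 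You instead count \emph{rotations}: in the two-involution case you show the rotations of $T(S)$ lie in $\{1,r^{\pm1},r^k,r^{2k}\}$ and $\sqrt S$ adds at most one more, giving $n\le 6$, and then eliminate the only surviving parameters $(n,k)=(5,2),(5,3)$ by exhibiting the missing rotation $r^{-k}$ --- a fully self-contained contradiction that avoids the computer-generated table. In the three-involution case your centralizer/automorphism argument ($\mathrm{Aut}(S_3)=\mathrm{Inn}(S_3)$, so $g=c\pi(g)$ and $g^2=\pi(g)^2$ can never be a transposition) is cleaner and stronger than the paper's: it pins down $G=S_3$ exactly rather than merely excluding $|G|=48$. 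Both of your replacements are sound (I checked the rotation list for $T(S)$, the coprimality and exclusion constraints, and the $C_G(\la S\ra)\cap\la S\ra=1$ step), and they make this proposition independent of Table \ref{tab1} except in the involution-free subcase. Only cosmetic quibbles: the parenthetical justification that a central involution $c$ is impossible should say that $|\la S\ra|\le 4$ admits no product-free $3$-set (rather than ``no room for three reflections''), and you should note explicitly that $n\ge 3$ in the dihedral cases so that ``reflection'' versus ``rotation'' is unambiguous.
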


\begin{proof}
First suppose $S$ contains no involutions. If $S \cap S^{-1} = \varnothing$, then Theorem \ref{key}(vi) tells us that $G$ has order at most 37, and then by Theorem \ref{5.1}, $(G,S)$ is one of the possibilities listed in Table \ref{tab1}. In particular $|G| \leq 24$. If $S \cap S^{-1} \neq \varnothing$, then $S = \{a, a^{-1}, b\}$ for some $a, b$. But then $\langle S \rangle = \langle a, a^{-1}\rangle = \langle a \rangle$, so $\langle S \rangle$ is cyclic. Now by Proposition \ref{cyclic} we get $|G| \leq 24$. Next, suppose that $S$ contains at least two involutions, $a$ and $b$, with the third element being $c$. Then, since every 2-element subset of $S$ generates $\langle S \rangle$, we have that $H = \langle S \rangle = \langle a, b \rangle$ is dihedral and $S$ is locally maximal product-free in $H$. Let $o(ab) = m$, so $H \cong D_{2m}$.  The non-trivial coset of the subgroup $\langle ab \rangle$ is product-free of size $m$. So if $c$ lies in this coset, then we have $m = 3$ and $H \cong D_6$. If $c$ does not lie in this coset then $c = (ab)^i$ for some $i$, and from the relations in a dihedral group $ac^{-1} = ca$, $c^{-1}a = ac$, $bc^{-1} = cb$ and $c^{-1}b = bc$. The coset $\langle ab \rangle a$ consists of $m$ involutions, which cannot lie in $\sqrt S$. Thus $\langle ab \rangle a \subseteq T(S)$ by Lemma \ref{3.1}. 
A straightforward calculation shows that  
\begin{align*}
\langle ab \rangle a = T(S) \cap \langle ab \rangle a &= \{a,b,ac,ca,bc,cb,ac^{-1}, c^{-1}a, bc^{-1}, c^{-1}b\}\\
&= \{a,b,ac,ca,bc,cb\}
\end{align*}
This means $m \leq 6$, and $S$ consists of two generating involutions $a, b$ plus a power of their product $ab$, with the property that any two-element subset of $S$ generates $\langle a, b\rangle$. A glance at Table \ref{tab1} shows there are no locally maximal product-free sets of this form in $D_{2m}$ for $m \leq 6$. Therefore the only possibility is that $\langle S \rangle \cong D_6$, with $S$ consisting of the three reflections in $\langle S \rangle$.  By Theorem \ref{key}(i), the index of $\langle S \rangle$ in $G$ is a power of $2$. By Theorem \ref{key}(ii), $|G| \leq 2|T(S)|\cdot |\langle S\rangle|$. Thus $|G| \in \{6,12,24,48\}$. Suppose for contradiction that $|G| = 48$. Now $G = T(S) \cup \sqrt S$, and since $S$ consists of involutions, the elements of $\sqrt S$ have order 4. So $G$ contains two elements of order $3$, three elements of order 2 and the remaining non-identity elements have order $4$. Then the $46$ elements of $G$ whose order is a power of 2 must lie in three Sylow $2$-subgroups of order $16$, with trivial pairwise intersection. Each of these groups therefore has a unique involution and $14$ elements of order $4$, all of which square to the given involution.  But no group of order $16$ has fourteen elements of order $4$. Hence $|G| \neq 48$, and so $|G| \leq 24$. Therefore either $|G| \leq 24$ or $G$ contains exactly one involution.
\end{proof}

Before we establish the next result, we first make a useful observation. Suppose $S =\{a, b, c\}$ where $a, b, c \in G$ and $c$ is an involution. Then a straightforward calculation shows that 

\begin{equation}
\label{ts}
T(S) \subseteq \left\{\begin{array}{cc}1,a,b,c,a^2,b^2,ab,ba,ac,ca,bc,cb,\\
ab^{-1}, ba^{-1}, ca^{-1}, cb^{-1}, a^{-1}b, a^{-1}c, b^{-1}a, b^{-1}c
\end{array}
\right\}.
\end{equation}

\begin{lemma}\label{order3} Suppose $S$ is a locally maximal product-free set of size $3$ in $G$, every $2$-element subset of $S$ generates $\langle S\rangle$, and $S$ contains exactly one involution. Then either $|G| \leq 24$ or $S = \{a,b,c\}$, where $a$ and $b$ have order $3$ and $c$ is an involution. 
\end{lemma}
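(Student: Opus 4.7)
Write $S = \{a,b,c\}$ with $c$ the unique involution, so $a, b$ have order at least $3$. The plan is: (1) eliminate $a,b\in\hat S$; (2) reduce to $|G|\leq 2|\langle S\rangle|$; (3) assume $o(a)\geq 4$ and use $G=T(S)\cup\sqrt S$ together with \eqref{ts} to force either cyclicity of $\langle S\rangle$ or $|\langle S\rangle|$ small enough that $|G|\leq 24$. For step (1), suppose $a\in\hat S$: Theorem \ref{key}(iv) gives $o(a)=2k$ with all $k$ distinct odd powers in $S$, so $k\leq 3$. The case $k=1$ contradicts uniqueness of $c$. For $k\in\{2,3\}$, $a^{-1}\in S$ (equal to $a^3$ when $k=2$, and to $a^5$ when $k=3$ with $a^3=c$ forced), so $\{a,a^{-1}\}\subseteq S$, and the generation hypothesis yields $\langle S\rangle=\langle a\rangle$ cyclic; Proposition \ref{pro1} gives $|G|\leq 24$. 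The same argument applied to $b$ lets us assume $\hat S\subseteq\{c\}$ from now on.

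For step (2), since $a$ is not an involution, $\langle S\rangle$ is not an elementary abelian $2$-group. If $\hat S=\{c\}$, Theorem \ref{key}(iii) gives $|G|=2|\langle S\rangle|$. If $\hat S=\varnothing$, then $\sqrt{\{s\}}\subseteq\langle S\rangle$ for every $s\in S$, so $\sqrt S\subseteq\langle S\rangle$; together with $T(S)\subseteq\langle S\rangle$ from \eqref{ts} and Lemma \ref{3.1}, this yields $G=\langle S\rangle$. Either way $|G|\leq 2|\langle S\rangle|$, so it suffices to bound $|\langle S\rangle|$.

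For step (3), suppose for contradiction that $o(a)\geq 4$. First, $a^{-1}\notin S$, since otherwise $a^{-1}=b$ (as $o(a)\neq 2$ rules out $a^{-1}=a,c$) makes $\langle S\rangle$ cyclic and Proposition \ref{pro1} applies. Hence $a^{-1}$ either equals one of the $17$ non-$S$ entries of \eqref{ts} or satisfies $a^{-2}\in S$. Running through these coincidences yields, in every case, one of: a contradiction with $o(a)\geq 4$ or with uniqueness of $c$; cyclicity of $\langle S\rangle$ (handled by Proposition \ref{pro1}); the relation $o(a)=4$; or $b\in\{ac,\,ca,\,a^{-1}c,\,ca^{-1}\}$. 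In each residual configuration $\langle S\rangle=\langle a,c\rangle$ and $S$ remains locally maximal product-free inside $\langle S\rangle$; combining the crude bound $|\langle S\rangle|\leq|T(S)|+|\sqrt S|$ with \eqref{ts} and the control $\sqrt{\{a\}},\sqrt{\{b\}}\subseteq\langle S\rangle$ should bring $|\langle S\rangle|$ into the range where Theorem \ref{5.1} and Table \ref{tab1} apply, forcing $|\langle S\rangle|\leq 12$ and hence $|G|\leq 24$.

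The main obstacle is the $o(a)=4$ subcase: here $\langle a,c\rangle$ can a priori be quite large (for instance $S_4$) if $\langle a\rangle$ is not normalised by $c$, so $|\langle S\rangle|\leq|T(S)|+|\sqrt S|$ on its own is too weak. Tight control of $|\sqrt S|$ using $a,b\notin\hat S$ and the fact that $c$ is the only involution in $S$, together with explicit analysis of how the four candidate forms $b\in\{ac,ca,a^{-1}c,ca^{-1}\}$ interact with $o(a)=4$, will be needed to keep $|\langle S\rangle|$ within the reach of Theorem \ref{5.1}, after which Table \ref{tab1} finishes the bound.
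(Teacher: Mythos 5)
Your overall strategy in step (3) --- locating $a^{-1}$ in $G = T(S)\cup\sqrt S$ and running through the entries of \eqref{ts} --- is exactly the paper's argument, and steps (1) and (2) are correct but unnecessary detours. The problem is that you do not finish the case analysis: you end by listing ``residual configurations'' ($o(a)=4$ with $\langle a,c\rangle$ possibly as large as $S_4$, and $b\in\{ac,ca,a^{-1}c,ca^{-1}\}$) and explicitly defer the work needed to handle them. That is a genuine gap --- and, more to the point, those residual cases are illusory; a careful pass through the cases eliminates them outright, so the machinery you propose (bounding $|\sqrt S|$, invoking Theorem \ref{5.1}) is never needed.

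Concretely: each of the four relations $b=ac$, $b=ca$, $b=a^{-1}c$, $b=ca^{-1}$ is equivalent to a product of two elements of $S$ lying in $S$ (for instance $b=a^{-1}c$ gives $ab=c$, and $b=ca^{-1}$ gives $ba=c$), so all four contradict product-freeness and cannot survive as configurations. And the relation $o(a)=4$ only ever arises in this analysis from $a^{\pm 2}=c$ (when $a^{-1}\in\sqrt S$) or from $a^{-1}\in\{ac,ca\}$ (which force $c=a^{-2}$); in every such case $c\in\langle a\rangle$, so by the two-generation hypothesis $\langle S\rangle=\langle a,c\rangle=\langle a\rangle$ is cyclic and Proposition \ref{pro1} gives $|G|\leq 24$ immediately. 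Thus the ``$S_4$'' scenario you worry about never occurs: once the coincidences are worked through correctly, every case with $o(a)\geq 4$ yields either a contradiction or cyclicity, the only surviving possibility being $a^{-1}=a^2$, i.e.\ $o(a)=3$, and symmetrically for $b$. You had all the right ingredients; you just stopped before verifying that the supposedly hard cases collapse.
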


\begin{proof} Suppose $S = \{a,b,c\}$ where $c$ is an involution and $a, b$ are not. Consider $a^{-1}$. Recall that $G = T(S) \cup \sqrt S$. If $a^{-1} \in \sqrt S$ then $a^{-2} \in \{a,b,c\}$ which implies that either $a$ has order $3$ or $\langle S\rangle$ is cyclic (because for example if $a^{-2} = b$ then $\langle S\rangle = \langle a, b \rangle = \langle a\rangle$). Thus if $a^{-1} \in \sqrt S$ implies that either $a$ has order 3 or (by Lemma \ref{cyclic}) $|G| \leq 24$. Suppose then that $a^{-1} \in T(S)$. The elements of $T(S)$ are given in Equation \ref{ts}. 
If $a^{-1} \in \{b,b^2,ab,ba,ab^{-1}, ba^{-1}, a^{-1}b, b^{-1}a\}$ then by remembering that $\langle S \rangle = \langle a, b\rangle$, we deduce that $\langle S \rangle$ is cyclic, generated  by either $a$ or $b$. For example, $a^{-1} = ba$ implies $b \in \langle a\rangle$. Similarly, if $a^{-1} \in \{c,ac, ca, a^{-1}c, c^{-1}a\}$, then $\langle S \rangle$ is cyclic. Since $a$ has order at least 3, we cannot have $a^{-1} \in \{1,a\}$. If $a^{-1} \in \{bc,cb,b^{-1}c, c^{-1}b\}$, then $S$ would not be product-free. For instance $a^{-1} = b^{-1}c$ implies that $b^{-1}ca = 1$, and hence $ac = b$. The only remaining possibility is $a^{-1} = a^2$, meaning that $a$ has order 3. The same argument with $b^{-1}$ shows that $b$ also has order $3$.
\end{proof}

We can now prove Theorem \ref{msf3}, which states that if $S$ is a locally maximal product-free set of size 3 in a group $G$, such that every two element subset of $S$ generates $\langle S\rangle$, then $|G| \leq 24$.

\paragraph{Proof of Theorem \ref{msf3}}
Suppose $S$ is a locally maximal product-free set of size 3 in $G$ such that every two element subset of $S$ generates $\langle S\rangle$. Then by Lemma \ref{order3}, either $|G| \leq 24$ or $S = \{a,b,c\}$ where $a$ and $b$ have order $3$ and $c$ is an involution. In the latter case, we observe that $aca^{-1}$ is an involution, so must be contained in $T(S)$. Using Equation \ref{ts} we work through the possibilities. Obviously it is impossible for $aca^{-1}$ to be equal to any of $1, a, b, a^2$ or $b^2$ because these elements are not of order $2$. If any of $ac, ca, a^{-1}c, c^{-1}a, bc, cb, b^{-1}c$ or $cb^{-1}$ were involutions, then it would imply that $\langle S\rangle$ was generated by two involutions whose product has order 3. For example if $ac$ were an involution then $\langle c, ac\rangle = \langle a,c\rangle = \langle S \rangle$. That is, $\langle S\rangle$ would be dihedral of order $6$. But there is no product-free set in $D_6$ containing two elements of order 3, because if $x, y$ are the elements of order 3 in $D_6$ then $x^2 = y$ and $y^2 = x$. So the remaining possibilities for $aca^{-1}$ are $c, ab, ba, ab^{-1}, ba^{-1}, a^{-1}b$ and $b^{-1}a$. Now $aca^{-1} = ab$ implies $c = ba$, whereas $aca^{-1} = ab^{-1}$ implies $bc = a$ and $aca^{-1} = ba^{-1}$ implies $b = ac$, each of which contradicts the fact that $S$ is product-free. We are now left with the cases $aca^{-1} = c$, $aca^{-1} = ba$ and $aca^{-1} = a^{-1}b$ (which, if it is an involution, equals $b^{-1}a$). If $aca^{-1} = c$, then $\langle S \rangle = \langle a, c\rangle = C_6$, but the only product-free set of size 3 in $C_6$ contains no elements of order 3, so this is impossible. Therefore $aca^{-1} \in \{ba, a^{-1}b\}$. If $aca^{-1} = ba$, then $a^{-1}ba = ca^{-1}$, so $ac = a^{-1}b^{-1}a$, which has order 3. If $aca^{-1} = a^{-1}b$, then $ac = a^{-1}ba$, again of order 3. So we see that 
$$\langle S \rangle = \langle a, c : a^3 = 1, c^2 = 1, (ac)^3 = 1\rangle.$$
This is a well known presentation of the alternating group $\mathrm{Alt}(4)$. As $c$ is the only element of $S$ whose order is even, we see that $|\hat S| \leq 1$, and hence $|G| \leq 2|\mathrm{Alt}(4)| = 24$. Therefore in all cases $|G| \leq 24$.\qed

\section{Data and Programs}
Though Table \ref{tab1} is essentially just Table 5 from \cite{gh}, we have taken the opportunity here to correct a typographical error in the entry for the (un-named) group of order $16$. We  provide below the GAP programs used to obtain the table.

\begin{prog}
A program that tests if a set T is product-free.
\begin{verbatim}
## It returns "0" if T is product-free, and "1" if otherwise.
prodtest:= function(T)
	local  x, y, prod;
	prod:=0;
	for x in T do
	    for y in T do
	      if x*y in T then
	         prod:=1;
	      fi;
	    od;
	od; 
	return prod;
end;
\end{verbatim}
\end{prog}

\begin{prog}
A program for finding all locally maximal product-free sets of size $3$ in $G$.
\begin{verbatim}
##It prints the list of all locally maximal product-free sets of size 3 in G.
LMPFS3:=function(G)
local L, lmpf, combs, x, pf, H, y, z, s, i, q;
L:=AsSortedList(G); lmpf:=[]; combs:=Combinations(L,3);
for i in [1..Binomial(Size(L),3)] do
  pf:=combs[i]; 
  if prodtest(pf)=0 then
   s:=Size(lmpf); H:=Difference(L,pf);
   for y in [1..3] do
     for z in [1..3] do
       H:=Difference(H, [pf[y]*pf[z], pf[y]*(pf[z])^-1, ((pf[y])^-1)*pf[z]]);
     od; 
   od;
   for q in L do
       if q^2 in pf then
          H:=Difference(H, [q]);
       fi;
   od;	
   if Size(H) = 0 then
      lmpf:=Union(lmpf, [pf]);
   fi;
  fi; 
od;
if Size(lmpf) > 0 then 
  Print(G,"\n",L,"\n","Structure Description of G is ",StructureDescription(G),
  "\n", "Gap Id of G is ", IdGroup(G), "\n", "\n", lmpf, "\n", "\n");
fi;
end;
\end{verbatim}
\end{prog}

\begin{landscape}
\begin{table}
\begin{tabular}{ll|c|l|c}
$G$ && $S$ & $\la S \ra$ & \begin{tabular}{c}\# Locally maximal \\product-free sets \\of size $3$ in $G$\end{tabular}\\
 \hline
$\langle g: g^6 = 1\rangle$ & $\cong C_6$ & %
$\{g, g^3, g^5  \}$ & $\cong C_6$ &1 \\
$\langle g, h: g^3 = h^2 = 1, hgh = g^{-1} \rangle$ & $\cong D_6$
& $\{h, gh, g^2h\}$ & $\cong D_6$ & 1\\

$\langle g: g^8 = 1\rangle$ & $\cong C_8$ & %
$\{g, g^{-1}, g^4\}$ & $\cong C_8$ & 2\\
$\langle g, h: g^4 = h^2 = 1, hgh^{-1} = g^{-1}\rangle$ & $\cong D_8$ & %
$\{h, gh, g^2  \}$ & $\cong D_8$ & 4\\
$\langle g: g^9 = 1\rangle$ & $\cong C_9$ & %
$\{g, g^3, g^8  \}, \{g, g^4, g^7\}$ & $\cong C_9$ & 8\\
$\langle g, h: g^3=h^3=1, gh=hg\rangle$ & $\cong C_3 \times C_3$ & %
$\{g, h, g^2h^2  \}$ & $\cong C_3 \times C_3$ & 8\\
$\langle g: g^{10} = 1\rangle$ & $\cong C_{10}$ & %
$\{g^2, g^5, g^8\}, \{g, g^5, g^8\}$ & $\cong C_{10}$ & 6\\
$\langle g: g^{11} = 1\rangle$ & $\cong C_{11}$ & %
$\{ g, g^3, g^5 \}$ & $\cong C_{11}$ & 10\\
$\langle g: g^{12} = 1\rangle$ & $\cong C_{12}$ & %
$\{g^2, g^6, g^{10}  \}$ & $\cong C_6$ & 1\\
 &  & %
$\{g, g^6, g^{10}\}, \{g, g^3, g^8\}$ & $\cong C_{12}$ & 8\\
$\langle g, h: g^6 = 1, g^3 = h^2, hgh^{-1} = g^{-1}\rangle$ & $\cong Q_{12}$ & %
$\{g, g^3, g^5  \}$ & $\cong C_6$ & 1\\
Alternating group of degree 4 & $=$ Alt(4)& %
$\{x, y, z: x^2 = y^2 = z^3 = 1\}$ & $\cong$ Alt(4) & 48\\
&& $\{x, z, xzx: x^2 = z^3=1\}$ &&\\
&&$\{x, z, zxz: x^2 = z^3 = 1\}$&&\\
$\langle g: g^{13} = 1\rangle$ & $\cong C_{13}$ & %
$\{g, g^3, g^9\}, \{g, g^6, g^{10}\}$ & $\cong C_{13}$ & 16\\
$\langle g: g^{15} = 1\rangle$ & $\cong C_{15}$ & %
$\{g, g^3, g^{11}  \}$ & $\cong C_{15}$ & 4\\
$\langle g, h: g^4 = h^4 = 1, gh = hg\rangle$ & $\cong C_4\times C_4$ & %
$\{g, h, g^{-1}h^{-1}  \}$ & $\cong C_4\times C_4$ & 16\\
$\langle g, h: g^8=1, g^4=h^2, hgh^{-1} = g^{-1}\rangle$ & $\cong Q_{16}$ & %
$\{g, g^4, g^{-1}  \}$ & $\cong C_8$ & 2\\
$\langle g, h: g^8 = h^2 = 1, hgh^{-1} = g^5\rangle$ & (order 16) & %
$\{g, g^6, g^3h\}$ & $\cong G$ & 8\\
$\langle g, h: g^{10} = 1, g^5=h^2, hgh^{-1} = g^{-1}\rangle$ & $\cong Q_{20}$ & %
$\{g, g^5, g^8\}, \{g^2, g^5, g^8\}$ & $\cong C_{10}$ & 6\\
$\langle g, h: g^3 = h^7 = 1, ghg^{-1} = h^2\rangle$ & $\cong C_7\rtimes C_3$ & %
$\{gh, gh^{-1}, g^{-1}\}$ & $\cong C_7\rtimes C_3$ & 42\\
$\langle x : x^3 = 1\rangle \times \la g, h:  g^4 = 1, g^2 = h^2, hgh^{-1} = g^{-1}\rangle$ & $\cong C_3\times Q_8$ & %
$\{g^2, xg^2, x^2g^2\}$ & $\cong C_6$ & 1\\
$\langle g, h: g^{12} = 1, g^6 = h^2, hgh^{-1} = g^{-1}\rangle$ & $\cong Q_{24}$ & %
$\{g^2, g^6, g^{10}\}$ & $\cong C_6$ & 1\\
 &  & %
$\{g, g^6, g^{10}  \}$ & $\cong C_{12}$ & 4\\
\end{tabular}
\caption{Locally maximal product-free sets of size $3$ in groups of order up to $24$}
\label{tab1}
\end{table}
\end{landscape}

\end{document}